\definecolor{dark_purple}{rgb}{0.4, 0.0, 0.4}
\definecolor{dark_green}{rgb}{0.0, 0.7, 0.0}
\renewcommand\d{\delta}
\newcommand\G{{\mathcal G}}
\newcommand\DDD{{\mathcal D}}
\newcommand\BB{{\mathcal B}}
\let\cal=\mathcal      
\def\mcc{M\raise.5ex\hbox{c}C}
\def\mccarthy{M\raise.5ex\hbox{c}Carthy}
\def\ie{{\it i.e. }}
\def\M{{\cal M}}
\def\vare{\varepsilon}
\let\i=\infty
\def\la{\langle}
\def\ra{\rangle}
\def\={\ = \ }
\def\A{{\cal A}}
\def\BB{{\cal B}}
\def\F{{\cal F}}
\def\C{\mathbb C}
\def\T{\mathbb T}
\def\D{\mathbb D}
\def\B{\mathbb B}
\def\be{\setcounter{equation}{\value{theorem}} \begin{equation}}
\def\ee{\end{equation} \addtocounter{theorem}{1}}
\def\beq{\begin{eqnarray*}}
\def\eeq{\end{eqnarray*}}
\def\se{\setcounter{equation}{\value{theorem}}} 
\def\att{\addtocounter{theorem}{1}}
\def\vs{\vskip 5pt}
\def\bs{\vskip 12pt}
\def\bp{{\sc Proof: }}
\def\ep{{}{\hfill $\Box$} \vskip 5pt \par}
\def\oec{{}{\hspace*{\fill} $\lhd$} \vskip 5pt \par}
\def\bl{\begin{lemma}}
\def\el{\end{lemma}}
\def\bt{\begin{theorem}}
\def\et{\end{theorem}}
\def\bprop{\begin{prop}}
\def\eprop{\end{prop}}
\def\bd{\begin{definition}}
\def\ed{\end{definition}}
\def\br{\begin{remark}}
\def\er{\end{remark}}
\def\bexer{\begin{exercise}}
\def\eexer{\end{exercise}}
\newtheorem{theorem}{Theorem}[section]
\newtheorem{proposition}[theorem]{Proposition}
\newtheorem{lemma}[theorem]{Lemma}
\newtheorem{corollary}[theorem]{Corollary}
\newtheorem{definition}[theorem]{Definition}
\newtheorem{defin}[theorem]{Definition}
\renewcommand\Re{\mathrm{Re\, }}
\renewcommand\O{\Omega}
\newcommand\hio{H^\infty(\Omega)}
\newcommand\bak{{\B}_k}
\newcommand\bad{{\B}_d}
\newcommand\stretchit{r}
\newcommand\lam{\lambda}
\newcommand\atmu{{A}^2(\mu)}
\numberwithin{equation}{section}
\title{Norm preserving extensions of bounded holomorphic functions}
\author{\L{}ukasz Kosi\'nski
\thanks{Partially supported by the NCN grant UMO-2014/15/D/ST1/01972}
\and
John E. M\raise.5ex\hbox{c}Carthy
\thanks{Partially supported by National Science Foundation Grant  
DMS 156243}
}
\begin{document}

\bibliographystyle{plain}
\maketitle

\section{Introduction}

\subsection{Statement of Results}

Let $\O$ be a  set in $\C^d$, and $V$ be a subset of $\O$, with no extra structure assumed. 
A function $f : V \to \C$ is
said to be {\em holomorphic} if, for every point $\lambda \in V$, there exists $\vare >0$ and a holomorphic function
$F$ defined on the ball $B(\lambda, \vare)$ in $\C^d$ such that $F$ agrees with $f$ on 
$V \cap B(\lambda, \vare)$. Let  $\A$ be an algebra of bounded holomorphic functions on $V$,
equipped with the sup-norm.

 \bd
  We say $V$ has the {\em $\A$
extension property} if, for every $f$ in $\A$, there exists $F \in \hio$, such that
$F |_V = f$ and $\| F \|_\O = \| f \|$.
If  $\O$ is a bounded set, and 
$\A$ is the algebra of polynomials, we shall say $V$ has the extension property.
\ed

If $\O$ is pseudo-convex, and $V$ is an analytic subvariety of $\O$, it is a deep theorem of H. Cartan that every holomorphic function on $V$ extends to a holomorphic function on $\O$ \cite{car51}.
This is not true in general for extensions that preserve the $H^\i$-norm.
There is one easy way to have a norm-preserving extension.  We say $V$ is a {\em retract } 
of $\O$ if there is a holomorphic map $r: \O \to \O$
such that the range of $r$ is $V$ and $r |_V$ is the identity. If $V$ is a retract, then
$f \circ r$ will always be a norm-preserving extension of $f$ to $\O$.

In \cite{agmc_vn}, 
it was shown that if $\Omega$ is the bidisk, that is basically the only way that sets can have the extension property.

\bt
\label{thma1} \cite{agmc_vn}
 Let $V$ be a relatively polynomially convex subset of $\D^2$. Then $V$ has the extension property if and only if $V$ is a retract of $\D^2$.
 \et
 
We say that a set $V$ contained in a domain $\O$ is {\em  relatively polynomially convex} if the intersection of the
polynomial hull $\hat V$  with $\Omega$ is $V$. If $\hat V \cap \O$ has the extension property, so does $V$,
so the assumption of relative polynomial convexity is a natural one.


 In \cite{aly16}, J. Agler, Z. Lykova and N. Young proved 
  that, for the symmetrized bidisk, that is the set
 \be
 \label{eqa01}
 G \= \{ (z+w, zw) : z,w \in \D \} ,
 \ee
 not all sets with the extension property are retracts. They proved:
 \bt
 \label{thma2} \cite{aly16}
 The set $V$ is an algebraic subset of $G$ having the $H^\i(V)$ extension property if and only if either
 $V$ is a retract of $G$, or $V = {\mathcal R} \cup {\mathcal D}_\beta$, where
 ${\mathcal R} = \{(2z,z^2) : z \in \D\}$ and ${\mathcal D}_\beta = \{ (\beta + \bar \beta z, z):
 z \in \D \}$, and $\beta \in \D$.
 \et
 
 It is the purpose of this note to study the extension property for domains other than the bidisk
 and symmetrized bidisk.
 Our first main result is  for strictly convex bounded sets in $\C^2$:
 
 \bt
 \label{thma3}
 Let $\O$ be a strictly convex bounded subset of $\C^2$, and assume that $V \subseteq \O$ 
 is relatively polynomially convex. Then $V$ has the extension property if and only if $V$ is a retract of
 $\O$.
 \et

We prove Theorem~\ref{thma3} in Section \ref{secb}. In Section \ref{secc} we prove a similar result for balls in any dimension.
 In Section \ref{secd} we consider strongly linearly convex domains.
 We shall give a definition of strongly linearly convex in Section~\ref{secd};
roughly it says that the domain does not have second order contact with complex tangent planes.

   We prove:
 \bt
 \label{thma4}
 Let $\O \subseteq \C^d$ be a strongly linearly convex
 bounded
  domain with ${\mathcal C}^3$ boundary,
 and assume that $V \subseteq \O$ 
 is relatively polynomially convex, and not a singleton. If $V$ has the extension property, then $V$ is a totally geodesic
 complex submanifold of $\O$.
 \et

As a corollary of Theorem~\ref{thma4}, we conclude:
\begin{corollary}
\label{cora1}
 Let  $\O \subseteq \C^2$
 be a strongly linearly convex
 bounded
  domain with $C^3$ boundary,
 and assume that $V \subseteq \O$ 
 is relatively polynomially convex. If $V$ has the extension property, then $V$ is a
 retract.
 \end{corollary}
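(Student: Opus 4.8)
The plan is to derive the corollary from Theorem~\ref{thma4} by upgrading "totally geodesic complex submanifold" to "retract" in the two-dimensional setting. By Theorem~\ref{thma4}, if $V$ has the extension property and is not a singleton, then $V$ is a totally geodesic complex submanifold of $\O$. In $\C^2$ the only possibilities for the dimension of $V$ are $0$, $1$, or $2$. If $\dim V = 2$ then $V$ is open in $\O$; combined with relative polynomial convexity (so $V$ is also relatively closed) and connectedness of $\O$, this forces $V = \O$, which is trivially a retract via the identity. If $V$ is a singleton, it is a retract via a constant map. So the substantive case is $\dim_{\C} V = 1$: $V$ is a totally geodesic complex curve.

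For this case I would invoke the structure of complex geodesics (Lempert's theory) for strongly linearly convex domains with $C^3$ boundary. A one-dimensional totally geodesic complex submanifold of such a domain is, up to the action of $\mathrm{Aut}(\O)$ and reparametrization, the image of a complex geodesic $\varphi : \D \to \O$, i.e. an isometry for the Kobayashi/Carath\'eodory metric. The key point from Lempert's work is that every complex geodesic in a strongly linearly convex domain with sufficiently smooth boundary admits a holomorphic left inverse: there is a holomorphic retraction $\rho : \O \to \varphi(\D)$ with $\rho \circ \varphi = \varphi$, constructed from the associated "dual" map / stationary mapping (the Lempert projection). Then $r = \varphi \circ (\varphi^{-1}\circ\rho) $, or more directly the composition of $\rho$ with the biholomorphism of $\varphi(\D)$ onto $V$, exhibits $V$ as a retract of $\O$.

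Thus the chain is: extension property $\Rightarrow$ (Theorem~\ref{thma4}) totally geodesic complex submanifold $\Rightarrow$ (Lempert projection, using $d=2$, strong linear convexity, $C^3$ boundary) retract. I would organize the write-up as: first dispose of the degenerate dimensions ($0$ and $2$); then recall the definition of complex geodesic and the fact that a $1$-dimensional totally geodesic submanifold is the image of one; then cite and apply the existence of the Lempert holomorphic retraction onto a complex geodesic; finally assemble $r$ and check $r|_V = \mathrm{id}$ and $r(\O) = V$.

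The main obstacle I anticipate is making the passage "totally geodesic complex curve $\Rightarrow$ image of a complex geodesic $\Rightarrow$ admits a holomorphic retraction" fully rigorous with only $C^3$ boundary regularity, since Lempert's original retraction construction is usually stated under $C^\infty$ (or at least high-order) smoothness and strong convexity; one must check that strong linear convexity with $C^3$ boundary suffices for the existence and holomorphy of the projection onto the geodesic, and that "totally geodesic" (as opposed to merely "contains a geodesic through each pair of its points") genuinely pins $V$ down to a single complex geodesic's image rather than something larger. Verifying that relative polynomial convexity is compatible with — indeed forces — $V$ to be exactly $\varphi(\D)$ rather than a proper relatively closed subvariety of it is a secondary point to nail down.
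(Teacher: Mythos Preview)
Your approach is correct and essentially identical to the paper's: Theorem~\ref{thma4} plus the dimension count reduce to the one-dimensional case, and then Lempert's left inverse furnishes the retraction $r = f \circ L$ onto the geodesic. Your stated worries are easily dispatched---$V$ is connected because any two of its points are joined by a geodesic lying in $V$, and since $V$ is a connected one-dimensional complex submanifold containing a geodesic $\G$ (which is both open in $V$, by comparing dimensions locally, and relatively closed in $\Omega$), one gets $V=\G$; the $C^3$ hypothesis is ample for Lempert's theory via the references the paper already invokes.
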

In \cite{pz12}, Pflug and Zwonek proved that the symmetrized bidisk is an increasing union of strongly linearly 
convex domains with smooth (even real analytic) boundaries.
So contrasting Theorem~\ref{thma2} with Corollary~\ref{cora1} shows that the extension property implying a retract is not stable under increasing limits.

\subsection{Motivation and history}

One reason to study sets with the extension property is provided by spectral sets.
Let $\O$ be an open set in $  \C^d$. If $T = (T_1, \dots, T_d)$  is a $d$-tuple 
 of commuting operators on a 
Hilbert space with spectrum in $\O$, then one can define $f(T)$ for any $f \in H^\i(\O)$ by the Taylor functional calculus \cite{tay70b}.
We say $\O$ is a {\em spectral set}  for $T$ 
 if the following analogue of von Neumann's inequality holds:
\be
\label{eqab1}
\| f(T) \| \ \leq \ \sup_{\lambda \in \O} |f(\lambda)|  \quad \forall f \in H^\i(\O).
\ee
If $V \subseteq \O$, we say that $T$ is {\em  subordinate to $V$}  if the spectrum of $T$ is in $V$ and
$f(T) = 0$ whenever $f \in H^\i(\O)$ and $f |_V = 0$.

\begin{defin}
Let $V \subseteq \O$, a domain in $\C^d$, and let $\A $ be an algebra of functions that are holomorphic in some neighborhood of $V$. We say $V$ is an $\A$ von Neumann set w.r.t. $\O$
if, whenever $T$ is a $d$-tuple of commuting operators on a Hilbert space that has $\O$ as a spectral set
and that is subordinate to $V$, then
\be
\label{eqab2}
\| f(T) \| \ \leq \ \sup_{\lambda \in V} |f(\lambda)|  \quad \forall f \in \A.
\ee
\end{defin}
Note the big difference between \eqref{eqab1} and \eqref{eqab2} is whether the norm of $f(T)$
is controlled by  just 
the values of $f$ on $V$, or all of the values on $\O$.

Let $H^\i(V)$ denote the algebra of all functions holomorphic in a neighborhood of $V$, equipped with the supremum norm. One of the main results of \cite{agmc_vn} is this:
\bt
\label{thmab1} Let $\Omega$ be the bidisk $\D^2$, and  $V$ be a subset. Let $\A$ be a sub-algebra
of $H^\i(V)$. Then $V$ is an $\A$ von Neumann set if and only if it has the $\A$ extension property.
\et
In \cite{aly16}, the same theorem is proved when $\O$ is the symmetrized bidisk \eqref{eqa01}.
In Section~\ref{sece}, we prove that the theorem holds for any bounded domain $\O$ and any algebra containing the polynomials.
\bt
\label{thmab2}
Let $\O$ be a bounded domain in $\C^d$, and let $V \subseteq \O$. 
 Let $\A$ be a sub-algebra
of $H^\i(V)$.
 Then $V$ is an $\A$ von Neumann set if and only if it has the $\A$ extension property.
\et

Another reason to study sets with the extension property is if one wishes to understand Nevanlinna-Pick interpolation.
Given a domain $\O$ and $N$ distinct points $\lambda_1, \dots, \lambda_N$ in $\O$, the Nevanlinna-Pick problem is 
to determine, for each given set $w_1, \dots, w_N$ of complex numbers, 
\[
\inf  \{ \| \phi \|_{H^\i(\O)}  \ : \ \phi(\lambda_i) = w_i, \ \forall \ 1 \leq i \leq N \},
\]
and to describe the minimal norm solutions.
This problem has been extensively studied in the disk \cite{bgr90}, where the minimal norm solution is always unique, but is more elusive in higher dimensions.
Tautologically there is some holomorphic subvariety $V$ on which all minimal norm solutions coincide, but sometimes one
can actually say something descriptive about $V$, as in \cite{agmc_dv, kosthree}.
If $V$ had the extension property, one could split the analysis into two pieces: finding the unique solution on $V$,
and then studying how it extends to $\O$.

The first result we know of norm-preserving extensions is due to W. Rudin \cite[Thm 7.5.5]{rud69},
who showed that if $V$ is an embedded polydisk in $\Omega$, and the extension operator from $H^\i(V)$ to
$H^\i(\O)$  is linear of norm one, then
$V$ must be a retract of $\O$. Theorem~\ref{thma1} from \cite{agmc_vn} characterized sets in $\D^2$ that have the extension property,
and Theorem~\ref{thma2} from \cite{aly16} did this for the symmetrized bidisk. Neither of these results assume that there is a linear extension operator.
In \cite{ghw08}, Guo, Huang and Wang proved 
\bt
Suppose $V$ is an algebraic subvariety of  $\D^3$ that has the $H^\i(V)$ extension property, and there is a linear extension operator from $H^\i(V)$ to
$H^\i(\D^3)$. Then $V$ is a retract of $\D^3$.

If $V$ is an $H^\infty$-convex subset of $\D^d$ for any $d$, it has the  $H^\i(V)$ extension property, and the extension operator is an algebra homomorphism, then $V$ is a retract of $\D^d$.
\et

\section{Strictly convex domains in $\C^2$}
\label{secb}

A convex set $\O$ in $\C^d$ is called {\em strictly convex} if for every boundary point $\lambda$,
there is a real hyperplane $P$ such that $P \cap \overline{\O} = \{ \lambda \}$. Equivalently, it means
that there are no line segments in $\partial \O$.

Let $\O \subseteq \C^d$, and let $\lambda, \mu$ be two distinct points in $\O$. 
Following \cite{aly16}, we shall call the pair $\delta = (\lambda, \mu)$ a {\em datum}.
A {\em Kobayashi extremal} for $\d$ is a holomorphic map $k: \D \to \O$ such that
both $\lambda$ and $\mu$ are in the range of $k$, and the pseudo-hyperbolic distance $\rho$ between the
pre-images of $\lambda$ and $\mu$ is minimized over all holomorphic maps from $\D$ to $\O$.
A {\em Carath\'eodory extremal} for $\d$ is a holomorphic map $\phi: \O \to \D$ that maximizes
$\rho(\phi(\lambda), \phi(\mu))$.

If $\d$ is a datum, we shall say that the Kobayashi extremal is {\em essentially unique} if,
given any two Kobayashi extremals $k_1$ and $k_2$, they are related by
$k_2 = k_1 \circ m$, where $m$ is a M\"obius automorphism of $\D$.
We shall call the range of a Kobayashi extremal with datum $\delta$ a {\em geodesic} through
$\d$.
If a set $V \subseteq \O$ has the property that for any $\lambda, \mu$ in $V$, a geodesic through
$(\lambda,\mu)$ is contained in $V$, we shall say that $V$ is {\em totally geodesic}.

 A theorem of L. Lempert \cite{lem81} asserts that if $\O$ is convex,
every Kobayashi extremal $k$  has a left inverse, \ie a Carath\'eodory extremal $\phi$ satisfying
\[
\phi(k(z)) \= z \quad \forall \ z \in \ \D .
\]
A consequence is that every geodesic is a retract, since $r = k \circ \phi$ is the identity on ${\rm Ran}(r)$.

Much of the difficulty in characterizing subsets of $\D^2$ with the extension property
in \cite{agmc_vn} stemmed from the fact that on the bidisk, Kobayashi extremals need not
be essentially unique. In strictly convex domains, 
Kobayashi extremals are essentially unique \cite[Prop 8.3.3]{jp93}. Kobayashi extremals are also essentially unique
in the symmetrized bidisk $G$ \cite{ay06}.

We shall need the following result, the Royden-Wong theorem. A complete proof is in 
Lemmata 8.2.2 and 8.2.4 and Remark 8.2.3 in \cite{jp93}. For a function on the unit disk that is
 in a Hardy space, we shall use the same symbol for the function on $\D$  and for its non-tangential limit
function on the circle $\T$. We shall use $\bullet$ to mean the bilinear form on $\C^d$
\[
z \bullet w \= \sum_{j=1}^d z_j w_j .
\]

\bt
\label{thmb1}
Let $\O$ be a bounded convex domain in $\C^d$, and $k : \D \to \O$ a Kobayashi extremal for
some datum. Then:

(i) $k(z) \in \partial \O$ for a.e. $z \in \T$.

(ii) There exists a non-zero function $h \in H^1(\D, \C^d)$ such that
\be
\label{eqb11}
\Re [ (\lambda - k(z) ) \bullet (\bar z h(z) )] < 0 \qquad \forall\  \lambda \in \O, \ {\rm and\ a.e.\ } z \in \T .
\ee

(iii) There exists a holomorphic $\phi: \O \to \D$ that satisfies $\phi \circ k$ is the identity on $\D$,
and satisfies the equation
\[
[ \lambda - k( \phi(\lambda)) ] \bullet h(\phi(\lambda)) \= 0 \quad \forall\ \lambda \in \O.
\]
\et

\begin{defin}
Let $\O$ be an open set in $\C^d$, and $V \subseteq \O$. We say that $V$ is relatively polynomially convex
if $ \overline{V} \cap \O = V$, and $\overline{V}$ is polynomially convex in $\C^d$.
\end{defin}

\begin{proposition}
\label{propb1}
Let $\O$ be a strictly convex bounded domain in $\C^d$.
Let $V$ be relatively polynomially convex in $\O$.
If $V$ has the extension property, then $V$ is totally geodesic.
\end{proposition}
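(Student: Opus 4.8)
\medskip

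\emph{Sketch of approach.} Fix two distinct points $\lambda,\mu$ of $V$, put $\delta=(\lambda,\mu)$, and let $k\colon\D\to\O$ be a Kobayashi extremal for $\delta$. Strict convexity makes $k$ essentially unique \cite[Prop.~8.3.3]{jp93}, so the geodesic $G_\delta=k(\D)$ is well defined; normalise $k$ so that $k(0)=\lambda$ and $k(\sigma)=\mu$, where $\sigma=\rho_\O^{Kob}(\lambda,\mu)\in(0,1)$. By Theorem~\ref{thmb1}(iii) (Lempert's theorem) there is a Carath\'eodory extremal $\phi\colon\O\to\D$ with $\phi\circ k=\mathrm{id}_\D$. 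Since $G_\delta\subseteq\O$, since $\overline V\cap\O=V$, and since $\overline V$ is polynomially convex, it is enough to prove that $\|p\circ k\|_{H^\infty(\D)}\le\|p\|_{\overline V}$ for every polynomial $p$: this forces $k(z)\in\widehat{\overline V}=\overline V$ for every $z\in\D$, hence $G_\delta\subseteq\overline V\cap\O=V$, and then $V$ is totally geodesic by the arbitrariness of $\lambda,\mu$.

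The first reduction I would make is to showing that $k^{-1}(V)$ has a limit point in $\D$. Indeed, given a polynomial $p$ with $\|p\|_{\overline V}\le1$, the extension property supplies $F\in\hio$ with $F|_V=p|_V$ and $\|F\|_\O\le1$; then $F\circ k$ and $p\circ k$ are holomorphic on $\D$ and agree on $k^{-1}(V)$, so if that set has a limit point in $\D$ the identity theorem yields $F\circ k\equiv p\circ k$, whence $\|p\circ k\|_{H^\infty(\D)}=\|F\circ k\|_{H^\infty(\D)}\le\|F\|_\O\le1$; the general case follows by scaling. Since $0,\sigma\in k^{-1}(V)$ already, what is really needed is that $V$ meets the geodesic in more than these two points, clustering somewhere on it.

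The mechanism for producing many points of $V$ on $G_\delta$ is a Schwarz--Pick rigidity statement combined with relative polynomial convexity. For any polynomial $p$ with $\|p\|_{\overline V}\le1$ and any norm-preserving extension $F$ of $p|_V$, the map $F\circ k\colon\D\to\overline\D$ sends $0\mapsto p(\lambda)$ and $\sigma\mapsto p(\mu)$, so $\rho(p(\lambda),p(\mu))\le\sigma$; in the equality case $F\circ k$ is forced to be the unique M\"obius map of $\D$ carrying $0$ to $p(\lambda)$ and $\sigma$ to $p(\mu)$, and if $\rho(p(\lambda),p(\mu))$ is merely close to $\sigma$ then $F\circ k$ is close, uniformly on compacta, to such a M\"obius map. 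One first checks the bound is sharp, $\sup\{\rho(p(\lambda),p(\mu)):\|p\|_{\overline V}\le1\}=\sigma$, by approximating $\phi$ uniformly on the polynomially convex set $\overline V$ by polynomials of norm $\le1$ there, where strict convexity is used to control $\phi$ at $\overline V\cap\partial\O$. Taking near-extremal polynomials $p_n$ with norm-preserving extensions $F_n$ and passing to a subsequential limit $F$ (normal families) gives $F\in\hio$ with $\|F\|_\O\le1$, $F\circ k=\mathrm{id}_\D$, and $F|_V=\lim p_n|_V$. I would then argue that if $k^{-1}(V)$ were a proper relatively closed subset of $\D$, one could pick $c_0\in\D\setminus k^{-1}(V)$, so $\nu_0:=k(c_0)\notin\overline V$, lying near a boundary point of $k^{-1}(V)$, and, using relative polynomial convexity ($\nu_0\notin\widehat{\overline V}$) together with the rigidity above applied to polynomials near-extremal for a sub-datum of $\delta$, derive a contradiction; equivalently, $k^{-1}(V)$ is relatively open in $\D$, hence (being relatively closed, nonempty, and $\D$ connected) all of $\D$.

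I expect the two delicate points to be: (a) the polynomial approximation of the Carath\'eodory extremal $\phi$ on $\overline V$, where strict convexity of $\O$ must be used to control $\phi$ at the part of $\overline V$ lying on $\partial\O$ (via boundary regularity of $\phi$ and peak-function constructions), since this is what makes the bound $\sigma$ attained and legitimises the normalisations of the near-extremal polynomials; and (b) closing the loop at the end, that is, transferring the Schwarz--Pick rigidity of $F\circ k$ from the values of $F$ on $V$ to all of $G_\delta$ -- equivalently, proving that $k^{-1}(V)$ cannot be a proper relatively closed subset of $\D$. Point (b) is, I believe, the real heart of the argument.
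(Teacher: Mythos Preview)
Your reduction to showing that $k^{-1}(V)$ is all of $\D$ (equivalently, has a limit point) is valid, and your observation that the Carath\'eodory bound $\rho(p(\lambda),p(\mu))\le\sigma$ holds for any polynomial of norm $\le 1$ on $V$ (via the extension $F$ and Schwarz--Pick for $F\circ k$) is correct. The gap is precisely where you locate it: point~(b). The Schwarz--Pick rigidity of the near-extremal $F_n\circ k$ tells you only that $F_n\circ k$ is close to a M\"obius map on $\D$; but $F_n$ agrees with $p_n$ only on $V$, so all this says about $V\cap G_\delta$ is that $p_n$ behaves like a M\"obius map on $k^{-1}(V)$ --- which is perfectly consistent with $k^{-1}(V)=\{0,\sigma\}$. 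Your proposed contradiction (``pick $\nu_0=k(c_0)\notin\overline V$, use that $\nu_0\notin\widehat{\overline V}$, apply rigidity to a sub-datum'') does not close: a polynomial $q$ with $|q(\nu_0)|>\|q\|_{\overline V}$ shows $G\circ k\neq q\circ k$ for the extension $G$ of $q|_V$, but gives no handle on how many points of $V$ lie on the geodesic, and there are no ``sub-data'' available since the only points you know to be in $V\cap G_\delta$ are $\lambda$ and $\mu$.

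The paper's argument proceeds quite differently and avoids this issue altogether. It does not try to show $k^{-1}(V)$ is large; instead it argues by contradiction that if $G_\delta\not\subseteq\overline V$, then by polynomial convexity some piece of the \emph{boundary} $\partial G_\delta\subset\partial\O$ is bounded away from $\overline V$. Now the specific structure of the left inverse $\phi$ furnished by the Royden--Wong theorem (Theorem~\ref{thmb1}) is used: $\phi$ is defined via the zero set of a moving complex hyperplane $\{(\cdot-k(z))\bullet h(z)=0\}$, and for $z$ near the boundary point $\eta\in\T$ this hyperplane is a small perturbation of a supporting hyperplane of $\O$. Strict convexity then forces these nearby hyperplanes to meet $\O$ only inside the ball that misses $\overline V$; hence $\phi(V)$ omits an entire triangle $\Gamma\subset\D$ with a vertex on $\T$. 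The contradiction comes from Thomas's trick: composing $\phi$ with the Riemann map $g\colon\D\setminus\Gamma\to\D$ produces $\psi=g\circ\phi\colon\O\to\D$ with $\rho(\psi(\lambda),\psi(\mu))>\rho(\phi(\lambda),\phi(\mu))$. Approximating $\psi$ by a polynomial $p$ with $p(\O)\subset\D$ (easy, since $\O$ is convex) and invoking the extension property for $p|_V$ yields $F\in H^\infty(\O)$, $\|F\|_\O\le 1$, with $\rho(F(\lambda),F(\mu))>\rho(\phi(\lambda),\phi(\mu))$, contradicting the Carath\'eodory extremality of $\phi$. The two ingredients you are missing are (i) the use of Royden--Wong to see that $\phi(V)$ avoids a boundary triangle, and (ii) the Riemann-map trick that converts ``$\phi(V)$ omits a triangle'' into ``$\phi$ is not Carath\'eodory-extremal on $V$''.
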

\bp
Let us assume that $\lambda,\mu$ are distinct points in $V$, let $\G$ be the unique geodesic
through them, and let $k: \D \to \G \subset \O$ be a Kobayashi extremal. 
Assume that $\G $ is not contained in $\overline V$;
we shall derive a contradiction.

Since $\overline V$ is polynomially convex, there must exist some part of the boundary
of $\G$ that is not in $\overline V$. So there exists $\xi \in \partial \G,\ \eta \in \T$ and $\vare, \vare' > 0$,
so that $B(\xi, \vare) \cap \overline V = \emptyset$ and $k(\D(\eta, \vare') \cap \D) \subseteq B(\xi,\vare)$.

Let $h$ be the function from Theorem~\ref{thmb1}. Wiggling $\eta$ a little if necessary, we can assume that
both $h$ and $k$ have non-tangential limits at
$\eta$, and that \eqref{eqb11} holds for $z = \eta$. Then by part (ii) of Theorem~\ref{thmb1}, we have that
\[
\{ \Re[ (\lambda - \xi) \bullet ( \bar \eta h(\eta))] = 0 \}
\]
is a supporting plane for $\O$ that contains $\xi$. Since $\O$ is strictly convex,
small perturbations of this plane will only intersect
$\O$ in $B(\xi, \vare)$.
So there is a small triangle $\Gamma$ in $\D$, with a vertex at $\eta$, such that for
$z \in \Gamma$, we have
\[
\{ \lambda \in \O : \Re[(\lambda - k(z) ) \bullet h(z)] = 0 \} \cap V \= \emptyset .
\]
Therefore if $\lambda \in V$, we have $\phi(\lambda) \notin \Gamma$.

Now we use an idea of P. Thomas, \cite{th03}.
Let $g$ be the Riemann map from $\D \setminus \Gamma$ to $\D$.
Let $\psi = g \circ \phi$.
Then
\[
\rho (\psi(\lambda), \psi(\mu) ) \ > \ \rho(\phi(\lambda), \phi(\mu)) .
\]
Since $\O$ is convex, the function $\psi$ can be uniformly approximated
by polynomials on compact subsets,
so one can find a polynomial $p$ that maps $\O$ to $\D$ and 
satisfies
\[
\rho (p(\lambda), p(\mu) ) \ > \ \rho(\phi(\lambda), \phi(\mu)) .
\]
Since $V$ has the extension property, there is a function $F: \O \to \D$ that agrees with $p$ on $V$, 
and in particular
\[
\rho (F(\lambda), F(\mu) ) \ > \ \rho(\phi(\lambda), \phi(\mu)) .
\]
This contradicts the assumption that $\phi$ is a Carath\'eodory extremal for $(\lambda,\mu)$ in $\O$,
which it must be since it is a left inverse to $k$.
\ep

We can now prove Theorem~\ref{thma3}.

{\sc Proof of Theorem~\ref{thma3}}: The conclusion is obvious if $V$ is  a singleton, 
so we shall assume it contains at least two points. By Proposition~\ref{propb1}, we know that $V$ is totally geodesic.
We shall prove that in fact $V$ must either  be a single geodesic, and hence a one dimensional retract, or all of $\O$.

Let $\G$ be a geodesic in $V$, and assume that there is some point $a \in V \setminus \G$.
For each point $\lambda \in \G$, let $k_\lambda$ be the Kobayashi extremal that passes through
$a$ and $\lambda$, normalized by $k_\lambda(0) = a$ and $k_\lambda(r) = \lambda$ for some $0 < r < 1$.
Let $\DDD$ be a subdisk of $\G$ with compact closure.
Note that 
\[
\{ r : k_\lambda(r) = \lambda,\ \lambda \in \overline{\DDD} \}
\]
is bounded away from $0$ and $1$, since $r$ is the Kobayashi distance
between $\lambda$ and $a$.
Let $\BB = \D(\frac 12, \frac 14)$. Define
\beq
\Psi : \BB \times \DDD &\to & \O \\
(z, \lambda) &\mapsto & k_\lambda(z) .
\eeq
Claim: $\Psi$ is continuous and injective.

Proof of claim: Since geodesics are unique, any two geodesics that share two points must coincide. As $k_\lambda(0) = a$ for each $\lambda$, it follows that if $\lambda_1 \neq \lambda_2$,
then $k_{\lambda_1} (z_1) \neq k_{\lambda_2}(z_2)$ unless $z_1 = 0 = z_2$. Moreover, since each $k_\lambda$ has a left inverse, each $k_\lambda$ is injective, so $\Psi $ is injective.

To see that $\Psi$ is continuous, suppose that $\lambda_n \to \lambda$ and $z_n \to z$.
By Montel's theorem, every subsequence of $k_{\lambda_n}$ has a subsequence that
converges uniformly on compact subsets of $\D$ to a function $k : \D \to \O$.
But clearly $k(0) = a$ and $k$ maps some positive real number $r$ to $\lambda$,
where $r = \lim \rho(\lambda_n, a)$. Therefore $k = k_\lambda$.
So $\lim \Psi (\lambda_n, z_n)  = \Psi (\lambda, z)$, and hence $\Psi$ is continuous.
\oec

So $\Psi$ is a continuous injective map between two open subsets of $\C^2$;
so by the invariance of domain theorem, $\Psi$ is open. Therefore the range of
$\Psi$ is an open subset $U$ of $\O$, and since $V$ is totally geodesic, $U \subseteq V$. Since any point in $\O$ is connected by a geodesic to a point in $U$, and that 
geodesic must intersect $U$ in a continuum of points (in particular, more than one),
it follows that $V$ is all of $\O$.
\ep

\section{The ball}
\label{secc}

Let $\bad$ be the unit ball in $\C^d$, the set $\{ z \in \C^d: \sum_{j=1}^d |z_j|^2 < 1 \}$.
\bt
\label{thmc1} Let $V$ be be a relatively polynomially convex subset of $\bad$ that has the extension property.
Then $V$ is a retract of $\bad$.
\et
\bp
The result is obvious if $V$ is a singleton, so let us assume it has more than one point.
Composing with an automorphism of $\bad$, we can assume that $0 \in V$.
We will show that then $V$ is the image under a unitary map of $\bak$, for some $1 \leq k \leq d$.
First observe that if $a \in V \setminus \{ 0\}$, we can compose with a unitary so that it has
the form $(a_1, 0, \dots, 0)$. By Proposition~\ref{propb1}, we have that $\B_1 \subseteq V$.
Now we proceed by induction. Suppose that ${\B}_{k-1} \subsetneq V$, and that $b \in V \setminus 
{\B}_{k-1}$. Composing with a unitary, we can assume that 
$b = (b_1, \dots, b_k, 0, \dots, 0 )$ and $b_k \neq 0$.
Let $c$ be any point in $\bak$ with $\| c \| < |b_k|/2$.
Then
\[
c \= \frac{c_k}{b_k} (b_1, \dots, b_k, 0, \dots , 0) \ + \
(c_1 - \frac{c_k}{b_k} b_1, c_2 - \frac{c_k}{b_k} b_2, \dots, 0, \dots , 0).
\]
Then the first point on the right-hand side is in the geodesic connecting $b$ and $0$, so in in $V$;
and the second point is in ${\B}_{k-1}$, and so is also in $V$. Therefore the geodesic containing these
two points, which is 
the intersection of the plane containing these two points with $\bad$, is also in $V$, and hence
$c \in V$.

Continuing until we exhaust $V$, we conclude that $V = \Phi(\bak)$, for some $1 \leq k \leq d$ and
some automorphism $\Phi$ of $\bad$. \ep

\section{Strongly linearly convex domains}
\label{secd}

A domain $\O \subseteq \C^d$ is called {\em linearly convex} if, for every point $a \in \C^d \setminus \O$,
there is a complex hyperplane that contains $a$ and is disjoint from $\O$.
Now assume that $\O$ is 
 given by a $C^2$ defining function $r$ (\ie $\O = \{ z : r(z) < 0 \} $).
Then $\O$ is called {\em strongly linearly convex} if
\beq
\sum_{j,k=1}^d  \frac{\partial^2 r}{\partial z_j \partial \bar z_k} (a)
X_j \bar X_k
&\ > \ &
| \sum_{j,k=1}^d  \frac{\partial^2 r}{\partial z_j \partial \bar z_k} (a)
X_j  X_k | ,\\
&&\forall\ a \in \partial \O, \ X \in (\C^d)_* \ \text{with\ } 
\sum_{j}^d  \frac{\partial r}{\partial z_j } (a) X_j = 0 .
\eeq
Notice that we only check the inequality on complex tangent vectors.
Roughly speaking, a domain is strongly linearly convex if it is smooth, linearly convex and it remains linearly convex after small deformations.

A smooth domain $D$ is strictly convex if for any $a$ in the boundary of $D$ its defining function restricted to the real  tangent plane to $\partial D$ at $a$ 
attains a strict minimum at $a$, 
and it is strictly linearly convex if for any $a$ in the boundary of $D$ its defining function restricted to the
complex tangent plane (i.e. the biggest complex plane contained in the tangent plane)  to $\partial D$ at $a$ 
attains a strict minimum at $a$.
We shall call a smooth domain strongly convex if the Hessian of its defining function is strictly positive on the real tangent plane, and 
we shall call it strongly linearly convex if the Hessian of its defining function is strictly positive on the complex tangent plane. 
It is obvious that strong linear convexity implies strict linear convexity
 and that any strongly convex domain is strongly linearly convex. 

For  convenience  we shall  only consider  $\mathcal C^3$  domains, though the regularity actually needed
is
 $\mathcal C^{2,\epsilon}$ (which means that second order derivatives of the defining function are $\epsilon$-H\"older continuous).

If $\O$ is strongly linearly convex and has smooth boundary, it was proved by
Lempert \cite{lem84} that the Kobayashi extremals are unique and depend smoothly on points, vectors and even domains (in the sense of their defining functions)

\begin{lemma}[\cite{lem84}, Proposition 11, see also \cite{KZ16}, Proof of Theorem~3.1]
\label{lemd1}
Let $\Omega$ be strongly linearly convex with  $\mathcal C^3$ boundary, and 
let $f:\mathbb D\to \Omega$ be a complex geodesic.
Then there exists a domain
 $G\subset \mathbb D\times \mathbb C^{n-1}$ and a biholomorphic mapping $\Gamma:\bar \Omega \to \bar G$ such that $\Gamma(f(\lambda))=(\lambda, 0)$. Moreover, $\overline{ G} \cap (\mathbb T\times \mathbb C^{n-1}) = \mathbb T\times \{0\}$.
\end{lemma}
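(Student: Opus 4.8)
The plan is to reconstruct Lempert's normal form \cite{lem84} for a complex geodesic. First I would collect the facts about complex geodesics in a strongly linearly convex $\Omega$ with $\mathcal C^3$ boundary that are proved in \cite{lem84}: the geodesic $f$ is essentially unique, it extends to a map of class $\mathcal C^1$ on $\overline{\mathbb D}$ with $f(\mathbb T)\subset\partial\Omega$, and it admits a holomorphic left inverse $F:\Omega\to\mathbb D$ (the Lempert projection) which likewise extends continuously to $\overline\Omega$ and satisfies $F\circ f=\mathrm{id}_{\mathbb D}$. Consequently $\Pi:=f\circ F$ is a holomorphic retraction of $\Omega$ onto the closed one-dimensional submanifold $M:=f(\mathbb D)$, and $\overline M\cap\partial\Omega=f(\mathbb T)$.

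To build $\Gamma$ I would use $F$ as the first coordinate and obtain the remaining $n-1$ coordinates from the normal geometry of $M$. Since $\Omega$ is bounded and linearly convex it is pseudoconvex, hence Stein, and $M\cong\mathbb D$ is a contractible Stein submanifold; by Grauert's Oka principle its holomorphic conormal bundle in $\Omega$ is trivial, so one can choose global holomorphic functions $\Phi_1,\dots,\Phi_{n-1}$ on $\Omega$ that vanish on $M$ and whose differentials frame this bundle along $M$. Then $\Gamma:=(F,\Phi_1,\dots,\Phi_{n-1})$ restricts to a biholomorphism of a neighbourhood of $M$ in $\Omega$ onto a neighbourhood of $\mathbb D\times\{0\}$ in $\mathbb D\times\mathbb C^{n-1}$, sending $f(\lambda)$ to $(\lambda,0)$. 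The substantive claim is that, for a suitable choice of the $\Phi_j$, the map $\Gamma$ is \emph{globally} a biholomorphism of $\Omega$ onto a domain $G\subset\mathbb D\times\mathbb C^{n-1}$ which extends to a homeomorphism $\overline\Omega\to\overline G$; this is precisely Lempert's Proposition~11, obtained from his description of $\Omega$ as foliated by the (totally geodesic) fibres of $F$ and from the boundary regularity of geodesics. I expect this global and boundary analysis, rather than the local normal-form computation, to be the main obstacle.

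Granting the homeomorphism $\Gamma:\overline\Omega\to\overline G$ with $\Gamma(f(\lambda))=(\lambda,0)$, it remains to identify the part of $\partial G$ over $\mathbb T$. Since the first coordinate of $\Gamma$ is $F$, which is continuous on $\overline\Omega$, the set $\overline G\cap(\mathbb T\times\mathbb C^{n-1})$ is the $\Gamma$-image of $\{z\in\partial\Omega:F(z)\in\mathbb T\}$, so it suffices to prove $\{z\in\partial\Omega:F(z)\in\mathbb T\}=f(\mathbb T)$. The inclusion $\supseteq$ is immediate from $F\circ f=\mathrm{id}$. For $\subseteq$ I would use the Royden--Wong apparatus in the form valid for strongly linearly convex domains (the analogue of Theorem~\ref{thmb1}, also due to Lempert \cite{lem84}): a nonzero $h\in H^1(\mathbb D,\mathbb C^n)$ with $\Re[(\lambda-f(z))\bullet(\bar z h(z))]<0$ for all $\lambda\in\Omega$ and a.e.\ $z\in\mathbb T$, together with the identity $[\lambda-f(F(\lambda))]\bullet h(F(\lambda))=0$ for all $\lambda\in\Omega$. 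If $z_0\in\partial\Omega$ and $\zeta_0:=F(z_0)\in\mathbb T$, then passing to boundary limits in the identity gives $(z_0-f(\zeta_0))\bullet(\bar\zeta_0 h(\zeta_0))=0$, while the strict inequality shows that $H:=\{\lambda:(\lambda-f(\zeta_0))\bullet(\bar\zeta_0 h(\zeta_0))=0\}$ is a complex hyperplane disjoint from $\Omega$ through the boundary point $f(\zeta_0)$, hence is the complex tangent hyperplane to $\partial\Omega$ at $f(\zeta_0)$. Since strong linear convexity implies strict linear convexity, $H\cap\overline\Omega=\{f(\zeta_0)\}$, and as $z_0\in H\cap\overline\Omega$ we conclude $z_0=f(\zeta_0)\in f(\mathbb T)$. (The degenerate possibility $h(\zeta_0)=0$ is excluded by the finer boundary description of $f$ and $h$ in \cite{lem84}.) This yields $\overline G\cap(\mathbb T\times\mathbb C^{n-1})=\mathbb T\times\{0\}$, which is the claimed description of $\overline G$.
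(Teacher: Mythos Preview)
The paper does not prove this lemma at all: it is stated with the attribution ``\cite{lem84}, Proposition~11, see also \cite{KZ16}, Proof of Theorem~3.1'' and is used as a black box. So there is no ``paper's own proof'' to compare your proposal against; the authors simply import the result from Lempert.

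As a reconstruction of Lempert's argument your outline is reasonable in shape, but note one circularity: at the crucial global step you write that the fact that $\Gamma$ is a biholomorphism of $\Omega$ onto some $G\subset\mathbb D\times\mathbb C^{n-1}$ extending to $\overline\Omega\to\overline G$ ``is precisely Lempert's Proposition~11''. That \emph{is} the lemma you are proving, so invoking it there is not a proof. What Lempert actually does (and what your sketch gestures at but does not carry out) is construct the remaining coordinates not via an abstract Oka-principle trivialisation but via an explicit frame built from the geodesic and its associated map $h$, and then verify global injectivity and boundary regularity using the strong linear convexity hypothesis and the $\mathcal C^{2,\epsilon}$ regularity of $f$ and $F$ up to $\overline{\mathbb D}$. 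Your appeal to Grauert gives only a local normal form near $M$; upgrading this to a global biholomorphism of the whole of $\Omega$ is exactly the content that must be supplied.

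Your argument for the ``moreover'' clause is essentially correct and is the standard one: if $F(z_0)=\zeta_0\in\mathbb T$ with $z_0\in\overline\Omega$, then $z_0$ lies on the supporting complex hyperplane to $\partial\Omega$ at $f(\zeta_0)$, and strict linear convexity forces $z_0=f(\zeta_0)$. Two caveats: you need the Royden--Wong package for strongly linearly convex (not merely convex) domains, which is indeed in \cite{lem84} but should be cited as such rather than as Theorem~\ref{thmb1}; and the exclusion of $h(\zeta_0)=0$ and the passage to boundary limits both require the boundary regularity of $f$, $F$, and $h$ established by Lempert, so again you are leaning on the very machinery that underlies the lemma.
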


{\sc Proof of Theorem~\ref{thma4}:}

The proof is split  into two parts. The first one says that $V$ is totally geodesic. To get it we shall use the argument from the proof of Proposition \ref{propb1}, but using Lemma~\ref{lemd1} in lieu of the Royden-Wong theorem.

The second part says that any totally geodesic 
variety in $\Omega$ having an extension property is regular.

\begin{lemma}
\label{lem42}
	Let $\Omega$ and $V$ be as in Theorem~\ref{thma4}. Then $V$ is totally geodesic.
\end{lemma}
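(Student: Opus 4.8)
The plan is to mimic the proof of Proposition~\ref{propb1}, replacing the Royden--Wong machinery with Lemma~\ref{lemd1}, which is what we are given for strongly linearly convex domains. Fix two distinct points $\lambda,\mu\in V$, let $f:\D\to\O$ be the complex geodesic through them (unique by Lempert \cite{lem84}), and suppose for contradiction that the geodesic $\G=f(\D)$ is not contained in $\overline V$. Since $\overline V$ is polynomially convex, there is a boundary piece of $\G$ missing $\overline V$: there exist $\eta\in\T$, $\xi=f(\eta)\in\partial\G$ and $\vare,\vare'>0$ with $B(\xi,\vare)\cap\overline V=\emptyset$ and $f(\D(\eta,\vare')\cap\D)\subseteq B(\xi,\vare)$. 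Apply Lemma~\ref{lemd1} to get a domain $G\subset\D\times\C^{d-1}$ and a biholomorphism $\Gamma:\overline\O\to\overline G$ with $\Gamma(f(\lambda))=(\lambda,0)$ and $\overline G\cap(\T\times\C^{d-1})=\T\times\{0\}$. The coordinate projection $\pi_1:G\to\D$ onto the first factor is then a holomorphic left inverse of the geodesic $\lambda\mapsto(\lambda,0)$, so $\phi:=\pi_1\circ\Gamma$ is a Carath\'eodory extremal for $(\lambda,\mu)$ in $\O$ with $\phi\circ f=\mathrm{id}_\D$.

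Next I would show that $\phi(V)$ avoids a small triangle $\Gamma$ in $\D$ with a vertex at $\eta$. The point is that $\Gamma$ pushes the part of the boundary of $\O$ lying near $\xi$ onto the part of $\partial G$ near $(\eta,0)$, and by the second assertion of Lemma~\ref{lemd1}, a neighborhood in $\overline G$ of $(\eta,0)$ meets $\T\times\C^{d-1}$ only in $\T\times\{0\}$, i.e. the fiber $\{z\}\times\C^{d-1}$ over $z\in\D$ close to $\eta$ stays (after applying $\Gamma^{-1}$) inside $B(\xi,\vare)$, hence away from $V$. Concretely: pick $\vare'$ small enough that $\Gamma(B(\xi,\vare)\cap\O)$ contains $\{(z,w)\in G: z\in\D(\eta,\vare')\cap\D\}$ --- this is the content of the ``moreover'' clause of Lemma~\ref{lemd1}, possibly after shrinking $\vare'$. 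Then for any $z$ in the triangle $\Gamma\subset\D(\eta,\vare')$, every point $\lambda\in\O$ with $\phi(\lambda)=z$ satisfies $\Gamma(\lambda)\in\{z\}\times\C^{d-1}\cap G\subseteq\Gamma(B(\xi,\vare)\cap\O)$, so $\lambda\in B(\xi,\vare)$, hence $\lambda\notin V$. Therefore $\phi(\lambda)\notin\Gamma$ for all $\lambda\in V$.

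Now the Thomas \cite{th03} argument goes through verbatim. Let $g:\D\setminus\Gamma\to\D$ be a Riemann map and put $\psi=g\circ\phi:\O\to\D$; since $g$ strictly increases pseudo-hyperbolic distance between the two points $\phi(\lambda),\phi(\mu)$ (they lie in $\D\setminus\Gamma$), we get $\rho(\psi(\lambda),\psi(\mu))>\rho(\phi(\lambda),\phi(\mu))$. A strongly linearly convex domain with $C^2$ boundary is in particular $\C$-convex, hence polynomially convex, and holomorphic functions on it can be approximated locally uniformly by polynomials; so there is a polynomial $p:\O\to\overline\D$ (rescale slightly) with $\rho(p(\lambda),p(\mu))>\rho(\phi(\lambda),\phi(\mu))$. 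By the extension property there is $F:\O\to\overline\D$ agreeing with $p$ on $V$, so $\rho(F(\lambda),F(\mu))>\rho(\phi(\lambda),\phi(\mu))$, contradicting that $\phi$ is a Carath\'eodory extremal for $(\lambda,\mu)$. Hence $\G\subseteq\overline V$, and since $\overline V\cap\O=V$ by relative polynomial convexity and $\G\subset\O$, we conclude $\G\subseteq V$; as $\lambda,\mu$ were arbitrary, $V$ is totally geodesic.

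The main obstacle I anticipate is the geometric step in the second paragraph: extracting, from the bland statement $\overline G\cap(\T\times\C^{d-1})=\T\times\{0\}$, the quantitative fact that the image under $\phi$ of all of $V$ really does miss an open triangle anchored at $\eta$. One has to be careful that $\Gamma$ is a global biholomorphism of $\overline\O$ onto $\overline G$ (so it is uniformly continuous and maps boundary to boundary), use compactness of $\T$ to turn the fiber condition near $(\eta,0)$ into a genuine neighborhood statement, and make sure the resulting region in $\D$ on which $\phi(V)$ is forbidden contains a nondegenerate triangle with vertex at $\eta$ --- which is exactly what the Riemann map $g$ needs in order to strictly expand $\rho$. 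In the strictly convex $\C^2$ case this was handled by the supporting real hyperplane; here the role of that hyperplane is played by the fiber $\{z\}\times\C^{d-1}$, and verifying the inclusion $\Gamma(B(\xi,\vare)\cap\O)\supseteq\{(z,w)\in G:|z-\eta|<\vare'\}$ (after shrinking) is the one place where some genuine (if routine) work with Lemma~\ref{lemd1} is required.
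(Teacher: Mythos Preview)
Your proposal is correct and takes essentially the same approach as the paper: use Lemma~\ref{lemd1} to obtain a left inverse $\phi=\pi_1\circ\Gamma$ of the geodesic that extends continuously to $\overline\O$, then run the Thomas argument from Proposition~\ref{propb1}. The paper's phrasing is contrapositive (it shows $\phi(\overline V)\supseteq\T$, then uses the ``moreover'' clause of Lemma~\ref{lemd1} to deduce $\partial D\subseteq\overline V$, hence $D\subseteq V$ by polynomial convexity), and your geometric worry is dispatched by a routine compactness argument, since $\Gamma:\overline\O\to\overline G$ is a homeomorphism and any sequence in $\overline G$ with first coordinate tending to $\eta\in\T$ must converge to $(\eta,0)$.
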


\begin{proof}
It was proven by Lempert in \cite{lem84} for smoothly bounded strongly linearly convex domains,
and in \cite{kw13} for ${\mathcal C}^2$-boundaries, that the Carath\'eodory and Kobayashi metrics coincide on $\O$, and that geodesics $f: \D \to \O$
 are essentially unique and $\mathcal C^{1/2}$-smooth on $\mathbb D$, so they extend continuously
to the closed unit disk.

 Take $z_1, z_2\in V$ and a complex geodesic $f$ passing through these points. 
Let $D = f(\D)$, and let $\Gamma$ be as in Lemma~\ref{lemd1}. Then $F(z)=z_1$ is a left inverse to $\Gamma(f(\lambda))=(\lambda, 0)$,  and $F\circ  \Gamma$ is a left inverse of $f$ that extends to be continuous on $\overline{\O}$.

Therefore $F \circ \Gamma(\overline{V})$ must contain the whole circle $\T$, by the argument used in the proof
of Proposition~\ref{propb1}.
By Lemma~\ref{lemd1}  the only points of $\overline{\O}$ on which $F \circ \Gamma$ is unimodular are on the boundary of $D$.
Therefore, since $V$ is relatively polynomially convex, we have $D \subseteq V$, as required.
\end{proof}

\begin{lemma}
\label{lem43}
Let $\Omega$ be a bounded open set in $\C^n$, and let $V \subseteq \Omega$ be a relatively polynomially convex set that
has the extension property. Then $V$ is a holomorphic subvariety of $\Omega$.
\end{lemma}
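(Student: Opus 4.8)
The plan is to show that $V$ is locally the common zero set of a finite collection of holomorphic functions near each of its points, which is exactly the assertion that $V$ is a holomorphic subvariety of $\Omega$. Fix a point $p \in V$. First I would choose a small ball $B = B(p,\varepsilon)$ with $\overline{B} \subset \Omega$, and consider the ideal $I$ of all polynomials (or holomorphic functions on $\Omega$) that vanish on $V$; by relative polynomial convexity, $\overline{V}\cap\Omega = V$, so the common zero set in $\Omega$ of the polynomials vanishing on $V$ is exactly $V$. Thus $V = Z(I) \cap \Omega$ for a set of polynomials $I$. By Noetherianity of the ring of holomorphic functions on $B$ (or of polynomials), finitely many of them, say $g_1,\dots,g_m$, already cut out the same zero set on $B$, so $V \cap B = \{ z \in B : g_1(z) = \dots = g_m(z) = 0\}$, which is a holomorphic subvariety of $B$. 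Doing this for every $p\in V$ exhibits $V$ as a holomorphic subvariety of $\Omega$.

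The subtlety — and the step I expect to be the main obstacle — is justifying that the zero set of the (possibly infinite) family $I$ of polynomials vanishing on $V$ really equals $V$ inside $\Omega$, rather than something larger. This is where the extension property is needed: one must rule out that $V$ sits as a proper subset of its ``polynomial-zero-set hull.'' Here I would argue that if $q \in \Omega \setminus V$ but $q$ lies in $Z(I)$, then since $\overline{V}$ is polynomially convex and $q \notin \overline{V}$ (as $\overline{V}\cap\Omega = V$), there is a polynomial $h$ with $|h(q)| > \sup_{V} |h|$; rescaling, $h$ maps $V$ into $\D$ but $h(q) \notin \overline{\D}$. The extension property applied to $h|_V$ then produces $H \in H^\infty(\Omega)$ with $H|_V = h$ and $\|H\|_\Omega = \|h|_V\| < |h(q)|$, which is consistent and yields no contradiction directly — so instead the cleaner route is simply to observe $Z(I)\cap\Omega = V$ is automatic from relative polynomial convexity alone (polynomials separate $\Omega\setminus V$ from $\overline{V}$ because $\overline{V}$ is polynomially convex and misses those points), and the extension property is not actually needed for this lemma. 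I would therefore state the proof to lean only on relative polynomial convexity plus the local Noetherian property, and flag that the hypothesis ``has the extension property'' is carried along only because it is the standing hypothesis of Theorem~\ref{thma4}.

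Concretely, the steps in order: (1) reduce to a local statement at each $p \in V$; (2) use relative polynomial convexity to write $V = \{z \in \Omega : g(z) = 0 \ \forall g \in I\}$ where $I$ is the ideal of polynomials vanishing on $V$; (3) on a relatively compact ball $B$ around $p$, invoke the Noetherian property of $\mathcal{O}(B)$ to replace $I$ by a finite subset $\{g_1,\dots,g_m\}$ with the same zero locus on $B$; (4) conclude $V\cap B$ is an analytic subset of $B$, hence $V$ is a subvariety of $\Omega$. The only place requiring care is step (2)–(3), namely checking that passing from ``all polynomials in $I$'' to ``finitely many holomorphic functions on $B$'' does not enlarge the zero set; this follows because the ideal generated by $I$ in $\mathcal{O}(B)$ is finitely generated, and its zero set is the intersection of the zero sets of generators, which is contained in (indeed equal to) $\bigcap_{g \in I} Z(g) \cap B = V \cap B$.
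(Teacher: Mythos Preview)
Your central claim --- that relative polynomial convexity alone gives $V = Z(I)\cap\Omega$ for $I$ the ideal of polynomials vanishing on $V$ --- is false, and this is precisely where the extension property does work. Polynomial convexity separates points from $\overline V$ by \emph{modulus}, not by zero sets: take $V$ to be a Cantor set on the real axis inside $\Omega=\D$. Its closure is polynomially convex (compact with connected complement in $\C$), yet the only polynomial vanishing on an infinite set with an accumulation point is the zero polynomial, so $Z(I)\cap\Omega=\Omega$, not $V$. Thus step~(2) of your plan fails, and the hypothesis you propose to drop is essential.

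You were in fact one line away from the paper's argument. In your own notation, after producing $H\in H^\infty(\Omega)$ with $H|_V=h|_V$ and $\|H\|_\Omega=\|h\|_V<|h(q)|$, set $\psi_q:=H-h$. This is holomorphic on $\Omega$, vanishes on $V$, and $\psi_q(q)=H(q)-h(q)\neq 0$ since $|H(q)|\le\|H\|_\Omega<|h(q)|$. So $V=\bigcap_{q\in\Omega\setminus V}Z_{\psi_q}$ as an intersection of zero sets of functions in $H^\infty(\Omega)$ (not polynomials). The Noetherian step then has to be done for the local ring of germs $\mathcal O_p$, not for $\mathcal O(B)$ or the polynomial ring: the ideal generated by the germs of the $\psi_q$ at $p$ is finitely generated, so near $p$ the set $V$ is cut out by finitely many of the $\psi_q$. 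This is exactly the paper's proof.
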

\begin{proof}
Let $b$ be any point in $\Omega\setminus V$.
Then there is a polynomial $p$ such that $|p(b)| > \| p \|_V$.
By the extension property, there is a function $\phi \in H^\i(\O)$ such that
$\phi |_V = p$ and $\| \phi \|_\O = \| p \|_V$. Let $\psi_b = \phi-p$.
Then $\psi_b$ vanishes on $V$ and is non-zero on $b$.
Therefore $V = \cap _{b \in \Omega\setminus V} Z_{\psi_b}$.

Locally, at any point $a$ in $V$, the ring of germs of holomorphic functions is Noetherian
\cite[Thm. B.10]{gun2}. Therefore $V$ is locally the intersection of finitely many zero zets of functions
in $H^\i(\O)$, and therefore is a holomorphic subvariety.
\end{proof}

In the next proof, we shall write $X_*$ to mean $X \setminus \{ 0 \}$.
\begin{lemma}
\label{lem44}
	Let $V$ be a
 totally geodesic
 holomorphic subvariety of a 
 strongly
 linearly convex domain $\Omega$,
and assume that $V$ has the extension property. Than $V$ is a complex manifold.
\end{lemma}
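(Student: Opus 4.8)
The goal is to show that a totally geodesic holomorphic subvariety $V$ with the extension property has no singular points, i.e. it is a complex submanifold. The strategy is to pick a point $p \in V$ and show that near $p$, $V$ is the image of a holomorphic embedding of a ball (or a coordinate polydisc) — equivalently, that the germ of $V$ at $p$ is smooth. Since $V$ is already known to be a variety (Lemma~\ref{lem43}) and totally geodesic, the key geometric input is that through any two points of $V$ the (unique) complex geodesic of $\Omega$ lies in $V$. I would first reduce, via Lemma~\ref{lemd1}, to a normalized picture: fix a geodesic $D = f(\D)$ through $p$ and a nearby point, apply the biholomorphism $\Gamma$ of $\overline\Omega$ onto $\overline G \subset \overline{\D \times \C^{n-1}}$ so that $\Gamma \circ f(\lambda) = (\lambda, 0)$ and $\overline G \cap (\T \times \C^{n-1}) = \T \times \{0\}$. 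Working in these coordinates, $V$ contains the slice $\D \times \{0\}$, and the problem becomes local near an interior point of that slice.

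Next I would exploit total geodesy to produce many complex lines (images of geodesics) inside $V$ emanating from points of the distinguished slice. The picture from Section~\ref{secb} is the template: for $a \in V$ off the slice and $\lambda$ on the slice, the geodesic $k_\lambda$ through $a$ and $\lambda$ lies in $V$; letting $\lambda$ vary over a subdisc and $z$ over a small disc in $\D$, the map $\Psi(z,\lambda) = k_\lambda(z)$ is continuous and injective, so by invariance of domain its image is an open piece of $V$. The new feature here (versus Theorem~\ref{thma3}) is that $\Omega$ is $d$-dimensional, so this only shows $V$ contains an open piece of a complex \emph{surface} through $a$; iterating the construction, starting from an open $k$-dimensional piece $U \subseteq V$ and a point $b \in V \setminus \overline U$, and forming geodesics from $b$ to points of $U$, one builds an open $(k{+}1)$-dimensional submanifold piece inside $V$ near $b$. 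The dimension count must stabilize: let $m$ be the maximal dimension of such an open submanifold piece in $V$. Then $V$ contains a nonempty open $m$-dimensional complex submanifold, and one shows (again by the "any point is joined to $U$ by a geodesic meeting $U$ in a continuum" argument of Theorem~\ref{thma3}) that every point of $V$ lies in the closure of such a piece, so the smooth locus $V_{\mathrm{reg}}$ is a pure $m$-dimensional manifold that is dense in $V$.

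The crux is then to rule out singular points, i.e. to show $V = V_{\mathrm{reg}}$. Suppose $p \in V$ is singular. Because $V$ is totally geodesic and every point near $p$ can be connected to the dense open regular piece by a geodesic of $\Omega$ contained in $V$, and geodesics are smooth curves meeting $V_{\mathrm{reg}}$ in an open set of parameter values, one gets through $p$ a family of smooth holomorphic discs in $V$ filling out a neighborhood of $p$ in a set of real dimension $2m$; the continuity/openness (invariance of domain) argument forces this set to be an open $m$-dimensional submanifold containing $p$, contradicting singularity of $p$. I expect the main obstacle to be making this last step rigorous: one must control the parametrization of geodesics through $p$ (using the smooth dependence on endpoints from \cite{lem84}, which needs the $\mathcal C^3$ strong linear convexity hypothesis) well enough to invoke invariance of domain at $p$ itself, and one must check that the geodesics through $p$ genuinely sweep out a full $m$-dimensional family rather than collapsing — this is where the strong (not merely strict) linear convexity, giving uniqueness and smooth dependence of geodesics, is essential, and it is the technical heart of the argument.
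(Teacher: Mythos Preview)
Your overall strategy---sweep out submanifolds inside $V$ by varying one endpoint of a geodesic over a previously constructed piece, using Lempert's smooth dependence---is indeed the paper's strategy. But the logical organization is different, and your final step has a genuine gap.

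The paper does \emph{not} first produce $V_{\rm reg}$ and then argue separately at a putative singular point $p$. Instead it fixes an arbitrary $a\in V$ (possibly singular) from the outset and builds, \emph{at $a$}, an increasing chain of real submanifolds $\mathcal F_1\subset\cdots\subset\mathcal F_m$ with $a\in\mathcal F_k\subset V$ and $\dim_{\R}\mathcal F_k=2k$. The inductive step is a Jacobian-rank computation: writing $\Phi(z,\lambda)$ for the geodesic through a fixed auxiliary point $a_0$ and the variable point $z\in\mathcal F_k$, one differentiates $\Phi(p(s),r(s))=p(s)$ and checks that the $(2k+2)\times 2n$ Jacobian of $(s,\lambda)\mapsto\Phi(p(s),\lambda)$ has full rank at $(0,t_0)$ precisely when the new geodesic direction $f'(0)$ is not in $\omega\,T_a\mathcal F_k$ for any $\omega\in\T$. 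This is an immersion argument into $\C^n$, not invariance of domain.

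Your last paragraph invokes ``invariance of domain at $p$'' to conclude that the geodesic family fills an open $m$-dimensional submanifold through $p$. That does not apply: invariance of domain needs the target to already be a $2m$-manifold, which is exactly what is in question at $p$. What you actually need is the Jacobian computation above, carried out at $p$ itself; doing it only at regular points and then ``pushing in'' does not close the argument.

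You also skip the termination step, which is the technical heart of the paper's proof: once $\dim_{\R}\mathcal F_m=2m=\dim_{\R}V$, one must show $\mathcal F_m=V$ near $a$. The paper first observes that a $2m$-real submanifold sitting inside an $m$-complex variety is in fact complex (via continuity of tangent spaces and \cite{dem11}), then argues by contradiction: if some $w\in V\setminus\mathcal F_m$ is close to $a$, the geodesic $g$ from $a$ to $w$ must have $g'(0)\in T_a\mathcal F_m$ (else one could build $\mathcal F_{m+1}$). To rule this out, the paper shows that the map $\alpha_\epsilon:(0,\infty)\times S_\epsilon\to (T_a\mathcal F_m)_*$, $(r,z)\mapsto r f_z'(0)$, is injective between spaces of the same real dimension $2m$, hence surjective by invariance of domain (here correctly applied). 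An analyticity argument then forces $w\in\mathcal F_m$. This mechanism---matching geodesic directions at $a$ with points on small spheres in $\mathcal F_m$---is absent from your outline and is not something the $V_{\rm reg}$ detour supplies.
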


\begin{proof}
	
	Set $m=\dim_\C V$ and take $a\in V$. We want to show
 that $V$ is a complex submanifold near $a$. 
If $m =0$, there is nothing to prove, so we shall assume $m > 0$.
Then there is a complex geodesic contained in $V$ passing through $a$. Of course, it is a $1$-dimensional complex submanifold, so trivially a $2$-dimensional real submanifold.

{\bf Claim.} (i)  If $\mathcal F$ is a local $2k$ dimensional real submanifold 
	smoothly embedded in
 $V$ in a neighbourhood of $a\in \mathcal F$, and there is a geodesic $f:\mathbb D\to V$ such that $f(0)=a$ and $f'(0)\notin \omega T_a\mathcal F$ for any $\omega\in \mathbb T$, then there is a local real submanifold of dimension $2k+2$ that is contained in $V$ and that contains $a$.
 
(ii)	Moreover,  one can always find such a geodesic $f$ whenever $k<m$ or if $k=m$ and $V$ does not coincide with $\mathcal F$ near $a$. 
	
	

	To prove  part (i) of the claim,
	 let us take a geodesic $f$ such that $f(0)=a$, 
and $f'(0)\notin \omega T_a\mathcal F$ for any $\omega\in \mathbb T$. 
Take any $t_0\in (0,1)$ and set $a_0=f(t_0)$. There are a neighbourhood $U$ of $a$ and a smooth mapping $\Phi:U\times \mathbb D \to \mathbb C^d$ such that,  for any $z$ near $a$,
 a disc $\Phi(z,\cdot)$ is a geodesic passing through $a_0$ and $z$ such that $\Phi(z,0)=a_0$ and $\Phi(z, t_z)=z$ for some $t_z>0$. It also follows from Lempert's theorem
\cite{lem81,lem84} (see \cite{kw13} for an exposition)
 that $z\mapsto t_z$ is smooth. Observe that $\Phi(a,\lambda) = f(m_{t_0}(\lambda))$.
	Note that the mapping:
	$$\mathcal F\times \mathbb D\ni (z,\lambda)\mapsto \Phi(z, \lambda)$$
	sends $(a,t_0)$ to $a$. We shall show that its Jacobian is non-degenerate in a neighborhood of $a$. This will imply that the image of this mapping is a smooth $2k+2$ dimensional real submanifold, thus proving the claim.
	
	Let $p:(-1,1)^{2k} \to \mathcal F$ give local coordinates for $\mathcal F$, $p(0)=a$. We need to compute the Jacobian of $(s,\lambda)\mapsto \Psi(s,\lambda):=\Phi(p(s), \lambda)$ at $(0,t_0)$. Write $\lambda$ in coordinates $(x,y)\in \mathbb R^2$ and $\Psi = (\Psi_1,\ldots, \Psi_{2n})$. 


The Jacobian matrix of $\Psi$ is the $(2k+2)$-by-$2n$ matrix with columns
\[
\begin{pmatrix}
\frac{\partial \Psi_i}{\partial s_j}
\\ \\
\frac{\partial \Psi_i}{\partial x}
\\ \\
\frac{\partial \Psi_i}{\partial y}
\end{pmatrix}
\]
Differentiating $\Psi(s, r(s)) = p(s)$, where $r(s) = t_{p(s)},$ we get
\[
\frac{\partial \Psi_i}{\partial s_j}
+
\frac{\partial \Psi_i}{\partial x} \frac{\partial r}{\partial s_j} 
+
\frac{\partial \Psi_i}{\partial y} \frac{\partial r}{\partial s_j} 
\ = \
\frac{\partial p_i}{\partial s_j} 
.
\]
So the rank of the Jacobian of $\Psi$ is the same as the rank of the matrix with columns
\be
\label{eqppsi}
\begin{pmatrix}
\frac{\partial p_i}{\partial s_j}
\\ \\
\frac{\partial \Psi_i}{\partial x}
\\ \\
\frac{\partial \Psi_i}{\partial y}
\end{pmatrix}
\ee
Since $\Psi(0, \lambda) = f(m_{t_0}(\lambda))$,  by differentiating with respect to $\lambda$
we find that $\partial \Psi/\partial \lambda(0, t_0) = - f'(0)$. 
So if  $f'(0)\not\in \omega T_a\mathcal F$ for any unimodular $\omega$, the rank of \eqref{eqppsi} is
2 more than the rank of $(\frac{\partial p_i}{\partial s_j})$, which is $2k$.
Therefore the Jacobian of $\Psi$ is of rank $2k+2$, and so we have established (i) of the claim.	
	
\vs
	
	Proof of part (ii), the existence of the geodesic.

Case: when $k < m$.
 For $z \in V\setminus \{a \}$, let
 $f_z$ denote a complex geodesic such that $f_z(0)=a$ and $f_z(t_z)=z$
	for some $t_z>0$. Note that the real dimension of the set $\{tf_z'(0):\ z\in V, t>0\}$ is equal at least to $2m$. 
	To see this one can proceed as follows: take $z_0\in V_{reg} \setminus \{ a \}$.
 Let $W$ be an $m-1$-dimensional complex surface near $z_0$ that is contained in $V$ and that it transversal to $f_{z_0}'(t_{z_0})$. Then the mapping $\mathbb D_*\times W\to \mathbb C^m$ given by $(\lambda, z) \mapsto \lambda f_z'(t_{z_0})$ is locally injective for $z$ close to $z_0$, so its image is $2m$ real dimensional.
	
	On the other hand, the real dimension of  the set $$
\mathbb T\cdot T_a\mathcal F \ :=\ \{\omega X:\ \omega\in \mathbb T, X\in T_a \mathcal F\}
$$ is equal to $2k+1$ or $2k$. 
So the existence of the geodesic follows.

Case: $k=m$. Then $\mathcal F$ is a real submanifold of dimension $2m$ that is contained in an analytic set of complex dimension $m$.
The singular points of $V$ are a subset of complex dimension at most $m-1$.
 Looking at the regular points of $V$, 
 we  get that $\mathcal F$ is a totally complex manifold,
  which means that its tangent space has a complex structure, on the regular points.
Since $\mathcal F$ is a submanifold near $z$, its tangent space depends continuously on $z$,
so $\mathcal F$ has a complex structure on its tangent space at all points.
So by Lemma I.7.15 in \cite{dem11}, it follows
that $\mathcal F$ must be a complex submanifold.

 Change variables so that $\mathcal F$ is the graph of a holomorphic mapping $\{(z', h(z')): ||z'-a'||<2\epsilon\}$ near $a=(a',a'')$. Denote by $S_\epsilon$ the set $$
S_\epsilon \= \{(z',h(z')):\ ||z'-a'||=\epsilon\} .
$$
 If there is $z'\in S_\epsilon$ such that $f_{(z', h(z'))}'(0)\notin \omega T_a \mathcal F = T_a \mathcal F$ 
we have shown (ii).
 Otherwise there exists $\epsilon_0 > 0$ such that,  for any $0 < \epsilon < \epsilon_0$,
  the mapping
  \se\att
\begin{eqnarray}
\nonumber
\alpha_\epsilon: (0, \infty)  \times S_\epsilon &\ \to \ & (T_a \mathcal F)_* \\
\label{eqd1}
(\stretchit,z) & \mapsto &  \stretchit f_z'(0)
\end{eqnarray}
 is injective,
 since geodesics are unique (with our normalizations,
namely
that $0$ maps to $a$ and $f_z^{-1}(z)$ is positive).
Therefore by the invariance of domain theorem, it is surjective.

%
%
	
	Thus, if in an arbitrarily small neighbourhood of $a$ there is a point $w\in V\setminus \mathcal F$, then a complex geodesic for $a$ and $w$ satisfies the desired condition. To see this take a geodesic $g$ such that $g(0)=a$, $g(t_w)=w$, $t_w>0$, and suppose that $g'(0)\in T_a\mathcal F$. 
(Note that we have shown that $\mathcal F$ is a complex manifold	, so its tangent space is invariant under
complex multiplication).
	
 By the surjectivity of $\alpha_\epsilon$ that we have just shown,
  we get that there is $(\stretchit,z)\in (0,\infty) \times S_\epsilon$ such that $\stretchit f_z'(0) = g'(0)$. The uniqueness of geodesics implies that $f_z=g$ and consequently $g(t_z) = f_z(t_z) =z$. 

Let \[
H(z',z'') \ := \ (||z' - a'||^2,||z''-h(z')||^2) .
\]
Then $H\circ g$ is a real analytic mapping. Observe that $H\circ g (t_z) = (\epsilon^2, 0)$.
Now $t_z$ depends on $z$ which depends on $\epsilon$.
So for every $0 < \epsilon < \epsilon_0$, there is a $t_z(\epsilon)$ in $(0,1)$ so that
 $H\circ g (t_z (\epsilon) ) = (\epsilon^2, 0)$.
 By analyticity, it follows that the second component of $H$ is identically zero in $(0,t_w)$, so $w=g(t_w)$
 is in $\mathcal F$, a contradiction.

%

\bs

Having proved the claim, we finish the proof of the Lemma in the following way.
Let $\F_1$ be a complex geodesic contained in $V$ that passes through $a$. If $m  > 1$,
for $1 \leq k \leq m-1$,  by part (ii) of the claim we can find a geodesic $f$ through $a$ such that $f'(0)$ is not in 
$\omega T_a \F_k$ for any complex $\omega$, and so by part (i) we can find a real $2k$-dimensional
smoothly embedded manifold $\F_{2k}$ contained in $V$ and containing $a$.
If $\F_m$ does not equal $V$ near $a$, then by part (ii) we could find a $\F_{m+1}$ contained in $V$,
which is ruled out by the dimension count.
Therefore $\F_m =V$ near $a$, so $V$ is a smooth real submanifold, and hence, as already shown, a
complex submanifold.
\end{proof}

Combining Lemmas~\ref{lem42} and \ref{lem44}, we finish the proof of Theorem~\ref{thma4}.

{\sc Proof of Corollary \ref{cora1}:}
 If $n=2$, then we have shown that if $V$ has the extension property and is not a singleton
or all of $\Omega$, then it is a one dimensional totally geodesic set.
So there exists a Kobayashi extremal $f: \D \to \O$ whose range is exactly $V$.
By Lempert's theorem, there is a left inverse $L : \O \to \D$.
Let $r = f \circ L$; this is a retract from $\O$ onto $V$.
\ep

\section{Spectral sets}
\label{sece}

Let $\Omega$ be a bounded open set in $\C^d$, and $V \subseteq \O$;
they shall remain  fixed for the remainder of this section.
Let $A(\O)$ denote the algebra of holomorphic functions on $\O$ that extend to be continuous on the closure
$\overline{\O}$, equipped with the supremum norm. For any positive finite measure $\mu$ supported on 
$\overline{\O}$, let $\atmu$ denote the closure of $A(\O)$ in $L^2(\mu)$.

A point $\lam \in \O$ is called a bounded point evaluation of $\atmu$ if there exists a constant $C$ so that
\be
\label{eqe1}
| f(\lam) | \ \leq \| f \| \quad \forall f \in A(\O) .
\ee
If \eqref{eqe1} holds, then by the Riesz representation theorem there is a function $k^\mu_\lam \in \atmu$ such that
\be
\label{eqe2}
 f(\lam)  \ = \ \la  f, k^\mu_\lam \ra \quad \forall f \in A(\O) .
\ee
Given a set $\Lambda \subseteq \O$, we say the measure $\mu$ is dominating for $\Lambda$ if every
point of $\Lambda$ is a bounded point evaluation for $\atmu$.
We shall need the following theorem of Cole, Lewis and Wermer \cite{clw92}; similar results
were proved by Amar \cite{am77} and Nakazi \cite{nak90}. See \cite[Thm. 13.36]{ampi} for an exposition.
For the polydisk or the ball, one can impose extra restrictions on the measures $\mu$ that need to be checked
\cite{am03,tw09}.

\bt
\label{thme1}
Let $\{ \lambda_1, \dots , \lambda_N \} \subseteq \O$ and $\{ w_1, \dots, w_N \} \subseteq \C$ be given.
For every $ \epsilon > 0$, there exists a function $f \in A(\O)$ of norm at most $1+\vare$ that
satisfies
\[
f(\lam_i) \= w_i, \quad \forall\ 1 \leq i \leq N 
\]
if and only if, for every measure $\mu$ supported on $\partial \O$ that dominates
$\{ \lambda_1, \dots , \lambda_N \}$, we have
\be
\label{eqe21}
\left[ (1 - w_i \overline{w_j}) \la k^\mu_{\lam_j}, k^\mu_{\lam_i} \ra_{\atmu} \right]_{i,j=1}^N \ \geq \ 0 .
\ee
\et

{\sc Proof of Theorem~\ref{thmab2}:}
We shall actually show slightly more: for any function $f \in \A$, we shall show that
$f$ can be extended to a function $\phi$ in $H^\i(\O)$ of the same norm that agrees with $f$ on $V$ if and 
only if
\be
\label{eqe3}
\| f(T) \| \ \leq \ \sup_{\lambda \in V} |f(\lambda)| \quad \forall \ T\  {\rm subordinate\ to\ } V.
\ee

One direction is easy. Suppose that $V$ has the $\A$ extension property, and $T$ is subordinate to $V$.
By the extension property, there exists $\phi \in H^\i(\O)$ such that $\phi |_V = f|_V$, and
$\| \phi \| = \sup_V |f|$.
Since $T$ is subordinate to $V$ and $f - \phi$ vanishes on $V$, we get that $f(T) = \phi(T)$, so
\[
\| f(T) \| \= \| \phi (T) \| \ \leq \ \| \phi \| \=   \sup_{\lambda \in V }|f (\lambda)| ,
\]
where the inequality comes from the fact that $\O$ is a spectral set for $T$.

The converse direction is more subtle. Suppose $\sup_{\lambda \in V }|f (\lambda)| =1$,
and assume we cannot extend $f$ to a function $\phi$ of norm one in $H^\i(\O)$. We shall construct $T$ 
subordinate to $V$ so that \eqref{eqe3} fails.

 Let $\{ \lambda_j \}$ be a countable dense set in $V$. Let $w_j = f(\lam_j)$. 
For each $N$, let $E_N = \{ \lambda_1, \dots, \lambda_N \}$.
If, for each $N$, one could find $\phi_N \in A(\O)$ of norm at most $1 + \frac{1}{N}$ and that satisfies
\[
\phi_N( \lam_i) \= f(\lam_i), \quad \forall\ 1 \leq i \leq N,
\]
then by Montel's theorem some subsequence of $(\phi_N)$ would converge to a function $\phi$
in the closed unit ball of $H^\i(\O)$ that agreed with $f$ on a dense subset of $V$, and hence on all of $V$.
So by Theorem~\ref{thme1}, there must be some $N$, and some measure $\mu$ that dominates $E_N$, so that
\eqref{eqe21} fails. Fix such an $N$ and such a $\mu$.

Let $k_j = k^\mu_{\lam_j}$, and let $\M$ be the linear span of 
$\{k_1, \dots, k_N \}$.
Define a $d$-tuple of operators $T = (T_1, \dots , T_d)$ on $\M$
\be
\label{eqe5}
T_r^*\  k_j \= \overline{\lam_j^r}\  k_j ,\quad 1 \leq r \leq d,\ 1\leq j \leq N .
\ee
(We write $\lam_j^r$ for the $r^{\rm th}$ component of $\lam_j$).  
By \eqref{eqe5}, the spectrum of $T^*$ is $\{ \overline{\lam_1}, \dots, \overline{\lam_N} \}$, so the spectrum of
$T$ is $E_N \subseteq V$.

If $\psi \in H^\i(\O)$, let $\psi^\cup(z) = \overline{\psi(\overline{z})}$.
From \eqref{eqe5} it follows that
\be
\label{eqe6}
\psi(T)^* k_j \= \psi^\cup(T^*) k_j \= \overline{\psi(\lam_j)} k_j .
\ee
Indeed, since $T$ is a $d$-tuple of matrices, $\psi(T)$ only depends on the value of $\psi$ and a finite number of
derivatives on the spectrum of $T$; so one can approximate $\psi$ by a polynomial, and for polynomials
\eqref{eqe6} is immediate.

We have that $T$ is the compression to $\M$ of multiplication by the coordinate functions on $\atmu$.
Let $P$ be orthogonal projection from $\atmu$ onto $\M$. If $g \in A(\O)$, then 
\[
\| g(T) \| \= \| P M_g P \| \ \leq \ \|g \|_{A(\O)},
\]
where $M_g$ is mutliplication by $g$ in $\atmu$.
So $T$ has $\O$ as a spectral set. By \eqref{eqe6}, if $\psi$ vanishes on $V$, then $\psi(T) = 0$,
so $T$ is subordinate to $V$.

We want to show that $\| f(T) \| > 1$. If it were not, then 
\be
\label{eqe7}
I - f(T) f(T)^* \ \geq \ 0 .
\ee
Evaluating the left-hand side of \eqref{eqe7} on $v = \sum a_j k_j$ and using \eqref{eqe6}, one gets
\be
\label{eqe8}
\la (I - f(T) f(T)^*) v, v \ra \= \sum_{i,j=1}^N \overline{a_i} a_j (1 - w_i \overline{w_j}) \la k_j, k_i \ra
\ee
But we chose $N$ and $\mu$ so that for some choice of $a_j$, \eqref{eqe8} is negative.
This contradicts \eqref{eqe7}.
\ep

\bibliographystyle{amsplain} 

\bibliography{references}

\end{document}